\author{Trung Hoa Dinh, Cong Trinh Le, The Van Nguyen, Bich Khue Vo}
\address{Trung Hoa Dinh, Department of Mathematics, Troy University, Troy, AL 36082, United States\\}
\email{thdinh@troy.edu}
\address{The Van Nguyen, Faculty of Mathematics-Mechanics-Informatics, VNU University of Science, Vietnam National University, Hanoi, Vietnam\\ }
\email{nguyenvanthe@hus.edu.vn}
\address{Cong Trinh Le, Division of Computational Mathematics and Engineering, Institute for Computational Science, Ton Duc Thang University, Ho Chi Minh City, Vietnam\\
Faculty of Mathematics and Statistics, Ton Duc Thang University, Ho Chi Minh City, Vietnam\\ }
\email{lecongtrinh@tdtu.edu.vn}
\address{Bich Khue Vo, Faculty of Economics and Law, University of Finance and Marketing, Ho Chi Minh City, Vietnam\\}
\email{votbkhue@gmail.com}
\title[Matrix power means and new characterizations of operator monotone functions ]{Matrix power means and new characterizations of operator monotone functions }
\subjclass[2010]{ 47A63,47A64,47A56,46E05,15B48}
\keywords{Matrix power mean, matrix geometric mean, operator monotone functions, inverse problems for means, Kubo-Ando means}
\theoremstyle{plain}
\newtheorem{theorem}{Theorem}
\newtheorem{lemma}[theorem]{Lemma}
\newtheorem{corollary}[theorem]{Corollary}
\newtheorem{proposition}[theorem]{Proposition}
\def\Tr{{\rm Tr\,}}
\def\M{{\mathbb{M}\,}}
\theoremstyle{definition}
\begin{document}
\maketitle

\begin{abstract}
For positive definite matrices $A$ and $B$, the Kubo-Ando matrix power mean is defined as
$$
P_\mu(p, A, B) = A^{1/2}\left(\frac{1+(A^{-1/2}BA^{-1/2})^p}{2}\right )^{1/p} A^{1/2}\quad  (p \ge 0).
$$

In this paper, for $0\le p \le 1 \le q$, we show that if one of the following inequalities 
\begin{align*} 
f(P_\mu(p, A, B)) \le f(P_\mu(1, A, B)) \le f(P_\mu(q, A, B))\nonumber
\end{align*}
holds for any positive definite matrices $A$ and $B$, then the function $f$ is operator monotone on $(0, \infty).$ We also study the inverse problem for non-Kubo-Ando matrix power means with the powers $1/2$ and $2$. As a consequence, we establish new charaterizations of operator monotone functions with the non-Kubo-Ando matrix power means. \end{abstract} 

\section{Introduction}

It is well-known that for $0 \le p \le 1 \le q$ and for non-negative numbers $a$ and $b$,
$$
\sqrt{ab} \le 
\left(\frac{a^p+b^p}{2}\right )^{1/p} \le \frac{a+b}{2} \le \left(\frac{a^q+b^q}{2}\right )^{1/q},
$$
or, 
$$
\sqrt{ab} \le \mu(p, a, b) \le \mu(1, a, b)  \le \mu(q, a, b),
$$
where $\mu(p, a, b) = \left(\frac{a^p+b^p}{2}\right )^{1/p}.$ Let $f$ be a continuous increasingly monotone function on $[0, \infty)$. Then
\begin{equation}\label{scalar}
   f( \sqrt{ab}) \le 
f\left(\left(\frac{a^p+b^p}{2}\right )^{1/p}\right ) \le f\left(\frac{a+b}{2}\right ) \le f\left(\left(\frac{a^q+b^q}{2}\right )^{1/q}\right ).
\end{equation}

Suppose that one of inequalities in (\ref{scalar}) holds for any non-negative numbers $a$ and $b$. Then, the following question is natural: Is it true that $f$ is increasingly monotone on $[0, \infty)$? 

Mention that if the inequality $$f(\sqrt{ab}) \le f\left(\frac{a+b}{2}\right)$$ holds for any $a, b \ge 0$, then $f$ is increasingly monotone on $[0, \infty)$. In \cite{hoa2015} the first author considered the following reverse AGM inequality
\begin{equation}\label{ragm}
\frac{a+b-|a-b|}{2} \le \sqrt{ab},
\end{equation}
and show that this inequality also characterizes increasingly monotone function. Some other characterizations were also obtained by the first author and his co-authors in \cite{hoa2018}. 


Now, let $\M_n$ be the algebra of $n\times n$ matrices over $\mathbb{C}$ and $\mathcal{P}_n$ denote the cone of positive definite elements in $\M_n$. Denote by $I_n$ the identity matrix of $\M_n$. A continuous function $f$ is said to be \textit{operator monotone} on $I \subset \mathbb{R}$ if for any Hermitian matrices  $A$ and $B$ with spectra in $I,$
$$
A \le B \quad \implies \quad f(A) \le f(B),
$$
where $f(A)$ is understood by means of the functional calculus. Operator monotone functions were firstly introduced by Loewner in 1930 \cite{loewner}.  In 1980, Kubo and Ando \cite{kuboando} introduced the theory of operator means on the set of $B(H)^+ \times B(H)^+$, where $B(H)^+$ is the set of positive invertible operators in a Hilbert space $H$. The main result in their paper is the one-to-one correspondence between operator means $\sigma$ and operator monotone functions $f$ on $(0, \infty)$ defined by
\begin{equation}\label{1-1}
A\sigma B = A^{1/2}f(A^{-1/2}BA^{-1/2})A^{1/2}.
\end{equation}
In 1996, Petz \cite{petz} introduced the theory of monotone metric in quantum information theory which was based on operator monotone functions. Therefore, such functions are important matrix analysis, quantum information and other areas as well. The authors refer readers to the books of  William Donoghue \cite{donoghue} and Barry Simon \cite{ simon} for more details about operator monotone functions. 

One of the most important means is the geometric mean $$A\sharp B = A^{1/2}(A^{-1/2}BA^{-1/2})^{1/2}A^{1/2}$$ which was firstly defined by Pusz and Woronowicz \cite{pusz}. The matrix AGM inequality states that for positive definite matrices $A$ and $B,$
\begin{equation}\label{agm}
A\sharp B \le \frac{A+B}{2}.
\end{equation} In 2014, Hiai and Ando gave a new characterization of operator monotone functions using (\ref{agm}). 
 They showed that if 
$$
f(A\sharp B) \le f\left (\frac{A+B}{2} \right)
$$
for any positive definite matrices $A$ and $B$, then $f$ is operator monotone on $(0,\infty)$. In 2015, the first author obtained a new characterization based on the matrix version of (\ref{ragm}): 
$$
f\left(\frac{A+B-|A-B|}{2}\right) \le f(A\sharp B)
$$
whenever $A$, $B$ are positive definite matrices. In another papers \cite{hoaosakatomiyama, hoa2018} a series of new characterizations related to the matrix Heron mean and Powers-Stormer inequality in quantum hypothesis testing theory \cite{au} were established.

Now, let $p$ be a real number, and  $a, b$ be positive. According to the relation (\ref{1-1}) the power mean $\mu(p, a, b)=\big(\frac{a^p+b^p}{2})^{1/p}$ is  corresponding to the monotone function $f$ as
$$
f_{\mu,p}(t) = \left(\frac{1+t^p}{2}\right )^{1/p}.
$$
Then for positive definite matrices $A$ and $B$, the Kubo-Ando matrix power mean is defined as
$$
P_\mu(p, A, B) = A^{1/2}\left(\frac{1+(A^{-1/2}BA^{-1/2})^p}{2}\right )^{1/p} A^{1/2}.
$$
Therefore, the chain of inequalities  (\ref{scalar}) for matrices looks like
\begin{align}\label{matrix}
f(A\sharp B) & \le f(A^{1/2}f_{\mu, p}(A^{-1/2}BA^{-1/2})A^{1/2}) \\
& \le f(A^{1/2}f_{\mu, 1}(A^{-1/2}BA^{-1/2})A^{1/2}) \nonumber \\
& \le f(A^{1/2}f_{\mu, q}(A^{-1/2}BA^{-1/2})A^{1/2}).\nonumber
\end{align}

Motivated by works mentioned above, in this paper we investigate new characterizations of operator monotone functions by  inequalities in  (\ref{matrix}). We show that if one of the inequalities in (\ref{matrix}) holds for any positive finite matrices $A$ and $B$, then the function $f$ is operator monotone on $(0, \infty).$ 

The more difficult situation is for the naive matrix extension of the power means. Let $1/2  \le p \le 1 \le q$. The function $t^{1/q}$ is operator concave, while the function $t^{1/p}$ is operator convex. Then we have
\begin{equation}\label{matrix3}
\left(\frac{A^p+B^p}{2}\right )^{1/p} \le \frac{A+B}{2} \le \left(\frac{A^q+B^q}{2}\right )^{1/q}
\end{equation}
whenever $A$ and $B$ are positive semidefinite. It is worth noting that the inequalities in (\ref{matrix3}) were discussed by Audenaert and Hiai and \cite{hiaiaudenaert}, where they obtained conditions on $p$ and $q$ for such that (\ref{matrix3}) holds true. In \cite{hoakhue} the authors also studied a new type of operator convex functions.


In Section \ref{section3} we study the inverse problem for the non-Kubo-Ando matrix power means when $q=1/2$ and $p=2$. As a consequence, we establish a new characterization of operator monotone functions by inequalities in (\ref{matrix3}). 

\section{Kubo-Ando matrix power means and characterizations}

In this section we study the problem of characterization of operator monotone functions using Kubo-Ando matrix power means. We start with the scalar cases.

\begin{theorem}\label{Scalar.Theorem}
Let $f$ be a continuous function on $[0, \infty)$. For $0\leq p \leq 1 \leq q$, suppose that one of the  following inequalities holds for all any non-negative numbers $a \leq b$:
\begin{align}
&f(\sqrt{ab}) \leq f\left(\left(\dfrac{ a^p+b^p}{2}\right)^{1/p}\right), \label{scalar.ineq1} \\
&f\left(\left(\dfrac{a^p+b^p}{2}\right)^{1/p} \right)\leq f\left(\dfrac{a+b}{2}\right), \label{scalar.ineq2}\\
&f\left(\dfrac{a+b}{2}\right) \leq f\left(\left(\dfrac{a^q+b^q}{2}\right)^{1/q} \right).
\label{scalar.ineq3}
\end{align}
Then the function $f$ is increasingly monotone on $[0,\infty)$.
\end{theorem}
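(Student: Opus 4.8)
The plan is to handle the three inequalities \eqref{scalar.ineq1}, \eqref{scalar.ineq2}, \eqref{scalar.ineq3} separately. In each case, given a pair $0 \le x \le y$, I will produce non-negative numbers $a \le b$ for which the two means occurring in the chosen inequality equal $x$ and $y$ respectively; substituting that pair into the hypothesis then gives $f(x) \le f(y)$, and since $x \le y$ is arbitrary this is exactly monotonicity of $f$ on $[0,\infty)$. Throughout I may assume the relevant exponent is non-degenerate ($0 < p \le 1$ for \eqref{scalar.ineq1}, $0 < p < 1$ for \eqref{scalar.ineq2}, $q > 1$ for \eqref{scalar.ineq3}); the remaining parameter values either make the inequality trivial or reduce it to the classical arithmetic--geometric mean characterization recalled in the introduction.

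For \eqref{scalar.ineq1} a single step works for every pair. Writing $u = a^p$ and $v = b^p$, the conditions $\sqrt{ab} = x$ and $\mu(p,a,b) = y$ become $uv = x^{2p}$ and $u+v = 2y^p$, so $u$ and $v$ are the roots $y^p \pm \sqrt{y^{2p}-x^{2p}}$ of a quadratic; for $0 \le x \le y$ these are real and non-negative, and $a = u^{1/p} \le b = v^{1/p}$ is the desired pair (with $a=0$, $b = 2^{1/p}y$ when $x = 0$). Hence $f(x) \le f(y)$ at once.

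For \eqref{scalar.ineq2} and \eqref{scalar.ineq3} this direct approach meets an obstruction: once the arithmetic mean of $a,b$ is fixed, the other power mean is confined to a bounded multiplicative window about it. Concretely, for \eqref{scalar.ineq2}, with $a+b = 2y$ and $0 \le a \le b$ the continuous map $b \mapsto (2y-b)^p + b^p$ takes the value $2y^p$ at $b=y$ and $2^p y^p$ at $b=2y$, so by the intermediate value theorem $\mu(p,a,b)$ can be made equal to any $x$ with $\alpha y \le x \le y$, where $\alpha := 2^{(p-1)/p} \in (0,1)$; this gives $f(x) \le f(y)$ only for such ``nearby'' pairs. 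To reach an arbitrary $0 < x \le y$ I iterate: taking $a=0$, $b = 2z$ gives $\frac{a+b}{2} = z$ and $\mu(p,a,b) = \alpha z$, hence $f(z) \le f(z/\alpha)$ for all $z>0$; choosing the least $k$ with $\alpha^{-k}x \ge y$ we obtain
\[
f(x) \le f(\alpha^{-1}x) \le f(\alpha^{-2}x) \le \cdots \le f(\alpha^{-(k-1)}x) \le f(y),
\]
where the first $k-1$ inequalities are the step just described and the last is the nearby-pair statement applied to $\bigl(\alpha^{-(k-1)}x,\, y\bigr)$, valid since $\alpha^{-(k-1)}x \le y \le \alpha^{-k}x$. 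Letting $x \to 0^+$ and using continuity of $f$ gives $f(0) \le f(y)$. The argument for \eqref{scalar.ineq3} is structurally the same, now fixing $a+b = 2x$ and using convexity of $t \mapsto t^q$: the map $b \mapsto (2x-b)^q + b^q$ on $[x,2x]$ runs from $2x^q$ to $2^q x^q$, so $\mu(q,a,b)$ realizes every $y$ with $x \le y \le \beta x$ where $\beta := 2^{(q-1)/q} > 1$, and one chains upward by the factor $\beta$ (the step $f(z) \le f(\beta z)$ again coming from $a = 0$).

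I expect this confinement to be the only genuine difficulty: in contrast with the geometric-versus-arithmetic mean inequality, one application of \eqref{scalar.ineq2} or \eqref{scalar.ineq3} cannot connect two far-apart arguments of $f$, so the iteration is unavoidable, and it relies on the multiplicative step $1/\alpha$ (respectively $\beta$) being strictly larger than $1$ --- which is precisely the non-degeneracy condition $p \ne 1$ (respectively $q \ne 1$). The rest is routine.
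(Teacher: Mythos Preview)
Your argument is correct and follows essentially the same route as the paper: case~\eqref{scalar.ineq1} is handled by a direct surjectivity argument (you solve the quadratic explicitly, the paper cites surjectivity of $x\mapsto 2^{-1/p}(x^p+x^{-p})^{1/p}$), while for~\eqref{scalar.ineq2} and~\eqref{scalar.ineq3} both you and the paper observe that the power mean is confined to a bounded multiplicative window around the arithmetic mean and then iterate through a geometric sequence to bridge arbitrary pairs $x\le y$. The only cosmetic difference is that you step by the extremal ratio $\alpha=2^{(p-1)/p}$ itself (using the boundary pair $a=0$, $b=2z$), whereas the paper picks an interior ratio $\gamma_0\in(\gamma,1)$; your version is slightly cleaner and also handles $x=0$ explicitly via continuity.
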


\begin{proof}
To prove the theorem,  we need to show that $f(x) \leq f(y)$ for any $0 \leq x \leq y$. Since all three inequalities in assumption are homogeneous, it suffices to prove the theorem in the case when either $x = 1$ and $y \geq 1$ or $x \leq 1$ and $y = 1$.  

Suppose that the first inequality \eqref{scalar.ineq1} holds, we have to show that for any $y \geq 1$ there exist $a,b \geq 0$ such that $1 = \sqrt{ab}$ and $ y = \left(\dfrac{a^p+b^p}{2}\right)^{1/p}$. Or, equivalently, there exists $a > 0$ such that 
\begin{equation} \label{eq.1}
y = \left( \dfrac{a^p+a^{-p}}{2}\right)^{1/p}.
\end{equation} 
Note that the function $f(x) = 2^{-1/p}(x^p+x^{-p})^{1/p}$ is surjective from $(0,\infty)$ onto $[1, \infty)$. Therefore, for any $y \geq 1$, there exist $a > 0$ such that the identity \eqref{eq.1} holds. 

Now, suppose that the second inequality \eqref{scalar.ineq2} holds, we will show that for any $x \leq 1$, there exist $a, b \geq 0$ such $1 = (a+b)/2$ and  $x = \left(\dfrac{a^p+b^p}{2}\right)^{1/p}.$ Or, equivalently, there exists $0 \leq a \leq 2$ such that 
\begin{equation}\label{eq.2}
x = \left(\dfrac{a^p+(2-a)^p}{2}\right)^{1/p}. 
\end{equation}
The function $g(x) = 2^{-1/p}(x^p+(2-x)^p)^{1/p}$ is surjective from $[0,2]$ onto $[2^{1-1/p}, 1]$. Therefore, for any $\gamma \leq x \leq 1$ where $\gamma = 2^{1-1/p}$, there exists $a>0$ such that the identity \eqref{eq.2} holds. Consequently, if $ \gamma y \leq x \leq y$, from the homogeneous property of the second inequality it implies that 
\[ f(x) \leq f(y).\] 
If $x < \gamma y$, let $\gamma_0 \in (\gamma,1)$ and consider the sequence $\{\gamma_0^n\}_{n \in \mathbb{N}}$. Since $\gamma_0^n \to 0$ as $n \to \infty$, there exists $k \in \mathbb{N}$ such that 
\[ 0 < \gamma \left(\gamma_0^n y\right) < \gamma_0^{n+1} y < x \leq \gamma_0^n y \leq \gamma_0^{n-1} y \leq \dots \leq \gamma_0 y \leq y.\]
Hence, from the previous argument it implies that 
\[ f(x) \leq f\left(\gamma_0^n y\right) \leq \dots \leq f\left(\gamma_0 y\right) \leq f(y).\]
Therefore, $f$ is increasingly monotone on $[0,\infty)$ if the second inequality holds. 

For the case when the last inequality \eqref{scalar.ineq3} holds, we proceed similarly as the case of the second inequality, in which we work for $x=1, y \geq 1$, and $ \gamma = 2^{1-1/q} > 1$. 
\end{proof}

\begin{theorem}\label{Matrix.Theorem 1}
Let $0 < p \leq 1$, and $f$ be a continuous function on $[0,\infty)$ that satisfies the following inequality
\begin{align}
f\left( A \sharp B\right) &\leq f\left(A^{1/2} f_{\mu, p}\left(A^{-1/2}BA^{-1/2}\right)A^{1/2}\right), \label{eq1.matrix.theo} 
\end{align}
for any positive definite matrices $A$ and $B$. Then $f$ is operator monotone on $(0, \infty)$.
\end{theorem}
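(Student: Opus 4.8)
The plan is to recast hypothesis \eqref{eq1.matrix.theo} as an inverse-mean problem and solve it by an explicit congruence, so that operator monotonicity of $f$ follows directly, with no limiting argument.

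\textbf{Step 1 (reduction).} For a fixed positive definite $A$, the equation $A\sharp B=X$ has the unique positive definite solution $B=XA^{-1}X$, and every pair of positive definite matrices $(A,B)$ arises this way with $X:=A\sharp B$. Since $A^{-1/2}(XA^{-1}X)A^{-1/2}=(A^{-1/2}XA^{-1/2})^{2}$, writing $g_p(s):=\bigl(\tfrac{1+s^{2p}}{2}\bigr)^{1/p}$ gives $f_{\mu,p}(A^{-1/2}BA^{-1/2})=g_p(A^{-1/2}XA^{-1/2})$. Hence \eqref{eq1.matrix.theo} is equivalent to
\[
f(X)\le f\bigl(A^{1/2}g_p(A^{-1/2}XA^{-1/2})A^{1/2}\bigr)\qquad\text{for all positive definite }A,X,
\]
and it suffices to show that for every pair of positive definite matrices $X\le Y$ there is a positive definite $A$ with $A^{1/2}g_p(A^{-1/2}XA^{-1/2})A^{1/2}=Y$; this then yields $f(X)\le f(Y)$, i.e. $f$ is operator monotone on $(0,\infty)$.

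\textbf{Step 2 (construction of $A$).} Diagonalise $X^{-1/2}YX^{-1/2}=W\,\mathrm{diag}(\mu_1,\dots,\mu_n)\,W^{*}$ with $W$ unitary; since $X\le Y$, each $\mu_i\ge 1$. The scalar ratio $h(s):=g_p(s)/s$ is continuous with $h(1)=1$, tends to $\infty$ as $s\to\infty$, and satisfies $\tfrac{d}{ds}\log h(s)=\tfrac{s^{2p}-1}{s(1+s^{2p})}\ge 0$ on $[1,\infty)$, so $h$ maps $[1,\infty)$ bijectively onto $[1,\infty)$. Choose $t_i\ge 1$ with $h(t_i)=\mu_i$, set $D:=\mathrm{diag}(t_1,\dots,t_n)$ (so $g_p(D)=D\,\mathrm{diag}(\mu_1,\dots,\mu_n)$), and put $V:=X^{1/2}WD^{-1/2}$, $A:=VV^{*}$. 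A short computation gives $VDV^{*}=X$ and $Vg_p(D)V^{*}=Y$; and writing the polar decomposition $V=A^{1/2}\widetilde U$ with $\widetilde U$ unitary, one obtains $A^{-1/2}XA^{-1/2}=\widetilde U D\,\widetilde U^{*}$, whence $g_p(A^{-1/2}XA^{-1/2})=\widetilde U\,g_p(D)\,\widetilde U^{*}$ and therefore $A^{1/2}g_p(A^{-1/2}XA^{-1/2})A^{1/2}=Vg_p(D)V^{*}=Y$, as wanted.

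\textbf{On the main obstacle.} The only genuine idea is the reduction in Step 1 — realising that fixing the geometric mean leaves exactly the family $B=XA^{-1}X$, which turns the problem into finding $V$ invertible and $D>0$ diagonal with $X=VDV^{*}$ and $Y=Vg_p(D)V^{*}$ simultaneously; this is always possible precisely because $h$ is surjective onto $[1,\infty)$. Everything after that is polar-decomposition bookkeeping, and the one point needing care is that we need $A^{-1/2}YA^{-1/2}$ to equal $g_p(A^{-1/2}XA^{-1/2})$ \emph{on the nose} (not merely to be similar to it), which is exactly what the choice $V=X^{1/2}WD^{-1/2}$ arranges. It is worth remarking that no perturbation or continuity argument is needed, in contrast to some Ando--Hiai-type proofs, and that the case $p=1$ of this construction gives a self-contained proof of the Ando--Hiai characterisation via $(A+B)/2$.
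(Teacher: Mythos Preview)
Your argument is correct and follows essentially the same route as the paper's proof: both solve the inverse-mean problem $A\sharp B=X$, $P_\mu(p,A,B)=Y$ via the surjectivity of the scalar map $s\mapsto\bigl(\tfrac{s^p+s^{-p}}{2}\bigr)^{1/p}$ onto $[1,\infty)$, first in the normalized case and then for general $X\le Y$ by congruence with $X^{1/2}$. Your explicit polar-decomposition bookkeeping in Step~2 is simply a hands-on verification of the congruence invariance of Kubo--Ando means that the paper invokes without comment, and the matrix $A=VV^*=X^{1/2}WD^{-1}W^*X^{1/2}$ you construct coincides with the paper's $X^{1/2}A_0X^{1/2}$.
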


\begin{proof}
Suppose that the inequality \eqref{eq1.matrix.theo} holds, it suffices to show that for any $0 \leq X \leq Y$, there exist two positive semidefinite matrices $A$ and $B$ such that 
\begin{align*}
X = A \sharp B, \quad 
Y = A^{1/2}f_{\mu, p}\left(A^{-1/2}BA^{-1/2}\right)A^{1/2}.
\end{align*}
Firstly, let us consider the case when $X = I_n$. We now show that there exist positive definite matrices $A_0$ and $B_0$ such that $A_0 \sharp B_0 = I_n$ and 
\begin{equation}\label{eq1.pro.of matrix theo}
Y = A_0^{1/2} f_{\mu, p}\left(A_0^{-1/2}B_0A^{-1/2}\right)A_0^{1/2} = A_0 f_{\mu,p}\left(A_0^{-2}\right).
\end{equation}
Since the function $h(x) = x f_{\mu,p}(x^{-2}) = \left( \dfrac{x^p+x^{-p}}{2}\right)^{1/p}$ is surjective from $(0,\infty)$ to $[1,\infty)$, we obtain that for any $I_n \leq Y$ there exist a matrix $A_0 > 0$ satisfying \eqref{eq1.pro.of matrix theo}. The matrix $B_0$ is equal to $A_0^{-1}$.  

In general, for $0 < X \leq Y$ we have $I_n \leq X^{-1/2}YX^{-1/2}$. By the above arguments, we can find positive semidefinite matrices $A_0$ and $B_0$ such that $A_0 \sharp B_0 = I_n$ and 
\[ X^{-1/2} Y X^{-1/2} = A_0^{1/2} f_{\mu, p}\left(A_0^{-1/2}B_0A^{-1/2}\right)A_0^{1/2} = P_{\mu}\left(p,A_0,B_0\right).\]
Consequently, applying \eqref{eq1.matrix.theo} to matrices $A = X^{1/2}A_0X^{1/2}$ and $B = X^{1/2}B_0 X^{1/2}$, we obtain that $f(X) \leq f(Y)$. In other words, $f$ is operator monotone. 
\end{proof}

\begin{theorem}\label{Matrix.Theo 2}
Let $0 < p \leq 1 \leq q$, and $f$ be a continuous function on $[0,\infty)$ that satisfies one of the following inequalities
\begin{align}
&f\left(\dfrac{A+B}{2}\right) \leq f\left(A^{1/2} f_{\mu, q}\left(A^{-1/2}BA^{-1/2}\right)A^{1/2}\right), \label{eq3.matrix.theo} \\ 
&f\left(A^{1/2} f_{\mu, p}\left(A^{-1/2}BA^{-1/2}\right)A^{1/2}\right) \leq f\left(\dfrac{A+B}{2}\right), \label{eq2.matrix.theo} \end{align}
for any positive definite matrices $A$ and $B$. Then $f$ is operator monotone on $(0, \infty)$.
\end{theorem}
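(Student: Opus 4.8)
The plan is to follow the template of the proofs of Theorems~\ref{Scalar.Theorem} and \ref{Matrix.Theorem 1}: to deduce $f(X)\le f(Y)$ for arbitrary positive definite $X\le Y$ by exhibiting positive definite $A,B$ whose two means sit exactly at $X$ and $Y$. The reduction rests on two facts: the arithmetic mean obeys $C^{*}\big(\tfrac{A+B}{2}\big)C=\tfrac{C^{*}AC+C^{*}BC}{2}$, and every Kubo--Ando mean --- in particular $P_\mu(q,\cdot,\cdot)$ --- obeys the transformer equality $C^{*}(A\,\sigma\,B)C=(C^{*}AC)\,\sigma\,(C^{*}BC)$ for invertible $C$ \cite{kuboando}. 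Hence, if for a \emph{single} normalized datum $\widetilde Y\ge I_n$ one can find positive definite $A_0,B_0$ with $\tfrac{A_0+B_0}{2}=I_n$ and $P_\mu(q,A_0,B_0)=\widetilde Y$, then for any $0<X\le Y$ the choice $\widetilde Y=X^{-1/2}YX^{-1/2}$, $A=X^{1/2}A_0X^{1/2}$, $B=X^{1/2}B_0X^{1/2}$ realizes $\tfrac{A+B}{2}=X$ and $P_\mu(q,A,B)=Y$, so \eqref{eq3.matrix.theo} gives $f(X)\le f(Y)$.

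Now $\tfrac{A_0+B_0}{2}=I_n$ forces $B_0=2I_n-A_0$, so $A_0,B_0$ commute and $P_\mu(q,A_0,B_0)=\phi(A_0)$ with $\phi(t)=\big(\tfrac{t^{q}+(2-t)^{q}}{2}\big)^{1/q}$. For $q>1$ (the case $q=1$ makes \eqref{eq3.matrix.theo} a triviality) $\phi$ is a strictly increasing homeomorphism of $[1,2)$ onto $[1,\gamma)$, $\gamma=2^{1-1/q}$, so the functional calculus produces $A_0=\phi^{-1}(\widetilde Y)$ exactly when $\mathrm{spec}(\widetilde Y)\subset[1,\gamma)$, that is, when $X\le Y<\gamma X$. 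To cover the general case I would interpolate geometrically: with $T=X^{-1/2}YX^{-1/2}$, fix $\gamma_0\in(1,\gamma)$ and $k$ so that $\|T\|^{1/k}\le\gamma_0$, and set $Z_i=X^{1/2}T^{i/k}X^{1/2}$, $i=0,\dots,k$, so $Z_0=X$, $Z_k=Y$, $Z_i\le Z_{i+1}$. Since $Z_i^{-1/2}Z_{i+1}Z_i^{-1/2}$ is similar to $Z_{i+1}Z_i^{-1}=X^{1/2}T^{1/k}X^{-1/2}$, hence to $T^{1/k}$, its spectrum lies in $[1,\gamma_0]\subset[1,\gamma)$; by the previous step each consecutive pair is realizable and $f(Z_i)\le f(Z_{i+1})$, so concatenation yields $f(X)\le f(Y)$.

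Inequality \eqref{eq2.matrix.theo} is handled by the mirror argument (take $p<1$): normalizing the arithmetic mean to $I_n$ gives $P_\mu(p,A_0,B_0)=\psi(A_0)$ with $\psi(t)=\big(\tfrac{t^{p}+(2-t)^{p}}{2}\big)^{1/p}$ a strictly increasing homeomorphism of $(0,1]$ onto $(\delta,1]$, $\delta=2^{1-1/p}<1$, so $(X,Y)$ is realizable as $\big(P_\mu(p,A,B),\tfrac{A+B}{2}\big)$ whenever $\delta Y<X\le Y$, and the same geometric chain --- now with $T=Y^{-1/2}XY^{-1/2}\le I_n$ and $k$ chosen so that $(\min\mathrm{spec}\,T)^{1/k}>\delta$ --- reaches all $0<X\le Y$, giving $f(X)\le f(Y)$; thus $f$ is operator monotone on $(0,\infty)$. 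I expect the crux to be exactly this: one application of the hypothesis only compares $X$ with a $Y$ whose relative spectrum $X^{-1/2}YX^{-1/2}$ lies in the bounded interval $[1,2^{1-1/q})$ (resp.\ $(2^{1-1/p},1]$), since forcing $\tfrac{A_0+B_0}{2}=I_n$ makes $A_0,B_0$ commute and caps the attainable mean; the technical work is then the verification that the interpolants $Z_i$ keep every consecutive ratio inside the admissible interval, which the similarity $Z_i^{-1/2}Z_{i+1}Z_i^{-1/2}\sim T^{1/k}$ reduces to the single choice of $k$, the rest (surjectivity of $\phi,\psi$, the transformer equality, functional calculus) being routine.
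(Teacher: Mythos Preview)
Your argument is correct, and in fact your chain construction is cleaner than the one in the paper. Both proofs share the same local step, which is precisely the paper's Lemma~\ref{Lemma.TB}: if $X\le Y<\gamma X$ with $\gamma=2^{1-1/q}$, then normalizing $(A_0+B_0)/2=I_n$ forces $B_0=2I_n-A_0$ to commute with $A_0$, and the surjectivity of $\varphi(t)=\big(\tfrac{t^q+(2-t)^q}{2}\big)^{1/q}$ onto $[1,\gamma]$ produces the required pair; the transformer equality then lifts this to arbitrary $X$. The divergence is in how the gap between $X$ and an arbitrary $Y\ge X$ is bridged. The paper works with the spectral decomposition $X^{-1/2}YX^{-1/2}=\sum_i\lambda_iE_i$, sorts the eigenvalues into the dyadic intervals $(\gamma_0^{m},\gamma_0^{m+1}]$, and constructs an explicit combinatorial chain in which the eigenvalues are raised block by block from $1$ up to $\lambda_i$; each consecutive pair then satisfies $Z_k\le Z_{k+1}\le\gamma Z_k$ by construction. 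Your route sets $Z_i=X^{1/2}T^{i/k}X^{1/2}$ with $T=X^{-1/2}YX^{-1/2}$ and uses the similarity $Z_i^{-1/2}Z_{i+1}Z_i^{-1/2}\sim T^{1/k}$ to force the relative spectrum into $[1,\gamma_0]$ uniformly by one choice of $k$. This avoids all the indexing of the paper's chain and makes the role of the bound $\gamma$ completely transparent; the only extra ingredient is the (standard) observation that similar positive matrices share the same spectrum. Conversely, the paper's chain is entirely elementary and sidesteps fractional powers of $T$ and similarity arguments. Your mirror treatment of \eqref{eq2.matrix.theo} with $\delta=2^{1-1/p}$ and $T=Y^{-1/2}XY^{-1/2}$ is likewise correct and parallels the paper's remark that ``the proof in the case when \eqref{eq2.matrix.theo} holds is similar.''
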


To prove Theorem \ref{Matrix.Theo 2}, we will need the following lemma.
\begin{lemma}\label{Lemma.TB}
Suppose $X$ and $Y$ are positive definite matrices satisfying $ X \leq Y < \gamma X$ (resp. $\gamma X < Y \leq X$) where $\gamma = 2^{1-1/q}$ (resp. $\gamma = 2^{1-1/p}$), then there exist positive matrices $A$ and $B$ such that 
\[ X = \frac{A +  B}{2}  \quad   \text{ and } \quad Y = P_{\mu}\left(q,A,B\right) \, (resp. \,\, Y = P_{\mu}\left(p,A,B\right)),\]
where $0 < p \leq 1 \leq q$. 
\end{lemma}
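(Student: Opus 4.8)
The plan is to reduce to the case $X = I_n$, exactly as in the proof of Theorem~\ref{Matrix.Theorem 1}, and then to solve the resulting scalar equation by functional calculus. The first ingredient is the congruence covariance of the matrix power mean: for every invertible $T$ and all positive definite $A,B$ one has
\[
P_\mu\bigl(r, TAT^{*}, TBT^{*}\bigr) = T\, P_\mu(r, A, B)\, T^{*}.
\]
This needs no operator monotonicity of $f_{\mu,r}$; it follows from the computation that, with $C = \bigl(TAT^{*}\bigr)^{-1/2}TA^{1/2}$, one has $CC^{*} = I_n$, hence $\bigl(TAT^{*}\bigr)^{-1/2}\bigl(TBT^{*}\bigr)\bigl(TAT^{*}\bigr)^{-1/2} = C\bigl(A^{-1/2}BA^{-1/2}\bigr)C^{*}$, so that applying $f_{\mu,r}$ and conjugating by $\bigl(TAT^{*}\bigr)^{1/2}$ (and using $\bigl(TAT^{*}\bigr)^{1/2}C = TA^{1/2}$) yields the identity. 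Setting $Z = X^{-1/2}YX^{-1/2}$, the hypotheses become $I_n \le Z < \gamma I_n$ (resp.\ $\gamma I_n < Z \le I_n$), and it suffices to produce positive definite $A_0, B_0$ with $\tfrac12(A_0 + B_0) = I_n$ and $P_\mu(q, A_0, B_0) = Z$ (resp.\ $P_\mu(p, A_0, B_0) = Z$); then $A = X^{1/2}A_0X^{1/2}$ and $B = X^{1/2}B_0X^{1/2}$ are the required matrices.

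Next, the constraint $\tfrac12(A_0 + B_0) = I_n$ forces $B_0 = 2I_n - A_0$, so $A_0$ and $B_0$ commute; imposing $0 < A_0 < 2I_n$ keeps both positive definite, and the functional calculus collapses the equation to a scalar one,
\[
P_\mu(q, A_0, B_0) = \left(\frac{A_0^{\,q} + (2I_n - A_0)^{q}}{2}\right)^{1/q} = \psi_q(A_0),
\]
where $\psi_q(t) = \bigl(\tfrac{t^{q} + (2-t)^{q}}{2}\bigr)^{1/q}$ for $t \in (0,2)$, and similarly with $\psi_p$ in the second case. Thus one only has to invert $\psi_q$ (resp.\ $\psi_p$) on the spectrum of $Z$ and take $A_0 = \psi_q^{-1}(Z)$.

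The study of $\psi_q$ is the heart of the matter. It is continuous on $(0,2)$, symmetric about $t = 1$, with $\psi_q(1) = 1$ and $\psi_q(t) \to \gamma = 2^{1-1/q}$ as $t \to 0^{+}$ or $t \to 2^{-}$; the case $q = 1$ is vacuous since then $\gamma = 1$ and $X \le Y < \gamma X$ is impossible, so assume $q > 1$. Convexity of $s \mapsto s^{q}$ gives
\[
\frac{d}{dt}\bigl(t^{q} + (2-t)^{q}\bigr) = q\bigl(t^{q-1} - (2-t)^{q-1}\bigr) < 0 \qquad (0 < t < 1),
\]
so $\psi_q$ is strictly decreasing on $(0,1]$ and hence a homeomorphism of $(0,1]$ onto $[1,\gamma)$. (In the second case, concavity of $s \mapsto s^{p}$ makes $\psi_p$ strictly increasing on $(0,1]$, a homeomorphism onto $(\gamma,1]$.) Since $Z$ is positive definite with spectrum a compact subset of $[1,\gamma)$ (resp.\ $(\gamma,1]$), the inverse $\psi_q^{-1}$ is continuous on the spectrum of $Z$; then $A_0 := \psi_q^{-1}(Z)$, defined by functional calculus, is positive definite with spectrum in $(0,1] \subset (0,2)$, so $B_0 = 2I_n - A_0 > 0$, and since $A_0$ commutes with $Z$ one obtains $\psi_q(A_0) = Z$, completing the construction.

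The main obstacle I anticipate is matching the range of $\psi_q$ to the hypothesis precisely: the interior constraint $0 < A_0 < 2I_n$, needed so that both $A_0$ and $B_0 = 2I_n - A_0$ be positive definite, corresponds exactly to the half-open range $[1,\gamma)$ (resp.\ $(\gamma,1]$), which is why the lemma requires the strict inequality $Y < \gamma X$ (resp.\ $\gamma X < Y$) rather than $\le$. Verifying the strict monotonicity of $\psi_q$ on $(0,1]$ — where the hypotheses $q \ge 1$, $p \le 1$ enter through the convexity, resp.\ concavity, of $s \mapsto s^{q}$, $s \mapsto s^{p}$ — together with the endpoint behaviour, is essentially all that is involved.
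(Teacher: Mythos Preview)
Your proof is correct and follows essentially the same route as the paper's: reduce to $X=I_n$ by congruence covariance of $P_\mu$, then invert the scalar function $\psi_q(t)=\bigl(\tfrac{t^q+(2-t)^q}{2}\bigr)^{1/q}$ via functional calculus to produce $A_0$ (and $B_0=2I_n-A_0$). Your version is in fact more careful than the paper's in two respects---you supply an explicit argument for the congruence identity $P_\mu(r,TAT^*,TBT^*)=T\,P_\mu(r,A,B)\,T^*$, and you track the half-open range $[1,\gamma)$ of $\psi_q$ on $(0,1]$ so as to guarantee that both $A_0$ and $B_0$ are genuinely positive definite, which is exactly why the strict inequality $Y<\gamma X$ appears in the hypothesis.
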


\begin{proof}
We show the lemma when $X \leq Y < \gamma X$, the remaining case can be obtained similarly. Firstly, let us consider the case when $X = I_n$, it suffices to show that given $I_n \leq Y = U \mathrm{diag}\left(\left\{ \lambda_i\left(Y\right)\right\}\right) U^* \leq \gamma I_n,$ we can find $A_0, B_0 \geq 0$ such that 
\[ I_n = \frac{A_0+  B_0}{2}  \quad \text{ and } \quad Y = P_{\mu}(q,A_0,B_0).\]
Or, equivalently, there exists $0 < A_0 \leq 2I_n$ such that $Y = P_{\mu}\left(q,A_0, 2 - A_0\right) = \varphi(A_0),$ where
\[ \varphi(x) = x^{1/2}f_{\mu,q}\left(x^{-1/2}(2-x)x^{-1/2}\right)x^{-1/2} = \left(\dfrac{x^q+(2-x)^q}{2}\right)^{1/q}.  \]
Note that $\varphi$ is continuous on $[0,2]$ and surjective from $[0,2]$ onto $[1,\gamma]$. Since $\lambda_{i}(Y) \in [1,\gamma],$ for each $i$, we can choose $\delta_i(Y) \in [0,2]$ such that $\varphi\left(\delta_i(Y)\right) = \lambda_i(Y).$ The matrix 
\[ A_0 := U \mathrm{diag}\left(\left\{\delta_i(Y)\right\}\right) U^*\]
satisfies 
\[ \varphi(A_0) = U \mathrm{diag}\left(\left\{\varphi\left(\delta_i(Y)\right)\right\}\right) U^* = U \mathrm{diag}\left(\{\lambda_i(Y)\}\right) U^*.\]

In general, for $0 < X \leq Y < \gamma X$, we have $I_n \leq X^{-1/2}YX^{-1/2} < \gamma I_n$. By the above arguments, we can find positive definite matrices $A_0$ and $B_0$ such that $(A_0 + B_0)/2 = I_n$ and $X^{-1/2}YX^{-1/2} = P_{\mu}(q,A_0,B_0)$. Now, let $A = X^{1/2}A_0X^{1/2}$ and $B = X^{1/2}B_0X^{1/2}$, we have $(A+ B)/2 = X$ and 
\[ Y = X^{1/2}P_{\mu}(q,A_0,B_0)X^{1/2} = P_{\mu}(q,A,B),\]
which completes the proof of Lemma \ref{Lemma.TB}.
\end{proof}
We are now ready to prove Theorem \ref{Matrix.Theo 2}. 
\begin{proof}[\bf Proof of Theorem \ref{Matrix.Theo 2}]
First we prove the case when the inequality \eqref{eq3.matrix.theo}. Let $0 \leq X \leq Y$ and $Y_0 = X^{-1/2}YX^{-1/2}$, and choose $\gamma_0 \in (1,2^{1-1/q})$. Consider the spectral decomposition, $Y_0 = \sum_{i = 1}^r \lambda_i E_i$ with the eigenvalues $\lambda_i$ listed in the decreasing order. Then, there exists a set of non-ascending integers $\{m_i \,|\, 1 \leq i \leq r\}$ such that 
\[ \gamma_0^{m_i} < \lambda_i \leq \gamma_0^{m_i+1}.\]  
Let $\ell_1 < \ell_2 < \dots < \ell_t = r$ be the sequence of indexes such that 
\begin{align*}
m_1 = \dots = m_{\ell_1} > m_{\ell_1 + 1}= \dots = m_{\ell_2} > m_{\ell_2 + 1} = \dots = m_{\ell_3} > \dots > m_{\ell_{t-1}+1} = \dots = m_{\ell_t} = m_r.
\end{align*} 
We have 
\[ \gamma_0^{m_{\ell_t}} < \lambda_r < \dots < \lambda_{\ell_{t-1}+1} \leq \gamma_0^{m_{\ell_t}+1} \leq \gamma_0^{\ell_{t-1}} < \lambda_{\ell_{t-1}} < \dots<\lambda_{\ell_{t-2}+1} \leq \gamma_0^{\ell_{t-1}+1} < \dots < \lambda_1 \leq \gamma_0^{\ell_1 + 1}. 
\]
It follows that
\begin{align*}
&I < \gamma_0 I < \gamma_0^2 I < \dots < \gamma_0^{m_{\ell_t}}I = \gamma_0^{m_{\ell_t}}\left(E_1+E_2+\dots+E_r\right)  \\
&\leq \lambda_rE_r + \dots +\lambda_{\ell_{t-1}+1}E_{\ell_{t-1}+1} + \gamma_0^{m_{\ell_{t}}} \left( E_{\ell_{t-1}} + E_{\ell_{t-1}-1} + \dots + E_1\right) \\
&\leq \lambda_rE_r + \dots +\lambda_{\ell_{t-1}+1}E_{\ell_{t-1}+1} + \gamma_0^{m_{\ell_{t}}+1} \left( E_{\ell_{t-1}} + E_{\ell_{t-1}-1} + \dots + E_1\right) \\
&\dots \\
&\leq \lambda_rE_r + \dots +\lambda_{\ell_{t-1}+1}E_{\ell_{t-1}+1} + \gamma_0^{m_{\ell_{t-1}}} \left( E_{\ell_{t-1}} + E_{\ell_{t-1}-1} + \dots + E_1\right)\\
&\leq \lambda_rE_r+\dots + \lambda_{\ell_{t-2}+1}E_{\ell_{t-2}+1} + \gamma_0^{m_{\ell_{t-1}}}\left(E_{\ell_{t-2}}+E_{\ell_{t-2}-1}+\dots+E_{1}\right) \\
&\leq \lambda_rE_r+\dots + \lambda_{\ell_{t-2}+1}E_{\ell_{t-2}+1} + \gamma_0^{m_{\ell_{t-1}}+1}\left(E_{\ell_{t-2}}+E_{\ell_{t-2}-1}+\dots+E_{1}\right) \\
&\dots \\
&\leq \lambda_r E_r + \dots +\lambda_{\ell_2+1}E_{\ell_2+1} + \gamma_0^{m_{\ell_2}}\left(E_{\ell_1} + E_{\ell_1-1} + \dots + E_1\right)\\
&\leq \lambda_r E_r + \dots +\lambda_{\ell_2+1}E_{\ell_2+1} + \gamma_0^{m_{\ell_2}+1}\left(E_{\ell_1} + E_{\ell_1-1} + \dots + E_1\right)\\
&\dots \\
&\leq \lambda_r E_r + \dots +\lambda_{\ell_2+1}E_{\ell_2+1} + \gamma_0^{m_{\ell_1}}\left(E_{\ell_1} + E_{\ell_1-1} + \dots + E_1\right) \\
&\leq \lambda_r E_r + \dots \lambda_1E_1 = Y_0 \leq \gamma_0^{m_{\ell_1}+1} I.
\end{align*}
After multiplying each term of the chain of inequalities on both sides by $X^{1/2}$, let $Z_k$ be the $k$-th expression of the chain, we obtain the following chain inequalities
\[ 0 \leq X = Z_1 \leq Z_2\leq \dots \leq Z_{m-1}\leq Y \leq Z_{m} = \gamma_0^{m_{\ell_1}+1}X. \]
where $m$ is a positive integer. The previous calculation gives us $Z_k \leq Z_{k+1} \leq \gamma Z_k$. Therefore, it follows from Lemma \ref{Lemma.TB} that there exist positive definite matrices $A$ and $B$ such that 
\[ Z_k = (A + B)/2  \quad \text{and} \quad  Z_{k+1} = P_{\mu}(q,A,B).\]
Consequently, 
\[ f(X) \leq f(Z_1) \leq f(Z_2) \leq \dots \leq f(Z_{m-1}) \leq f(Y).\]
In other words, $f$ is operator monotone. 

The proof in the case when \eqref{eq2.matrix.theo} holds is similar.
\end{proof}
\section{The inverse problem for non-Kubo-Ando matrix power means}\label{section3}

In \cite{hiaiaudenaert} Audenaert and Hiai determined values of $p$ and $p$ such that the following inequality holds true
$$
\left(\frac{A^p+B^p}{2}\right)^{1/p} \le \left(\frac{A^q+B^q}{2}\right)^{1/q} 
$$
whenever $A$, $B$ are positive semidefinite matrices. When $1/2 \le p \le 1 \le q$, according to the operator convexity of $t^{1/p}$ and operator concavity of $t^{1/q}$ we have
\begin{equation}\label{gen}
\left(\frac{A^p+B^p}{2}\right)^{1/p} \le \frac{A+B}{2} \le \left(\frac{A^q+B^q}{2}\right)^{1/q} 
\end{equation}
Recently, Lam and Le \cite{lam2} studied the quantum divergence generated by these inequalities:
$$
\Phi(A,B)=\Tr \left(\frac{A+B}{2} - \left(\frac{A^p+B^p}{2}\right)^{1/p}\right).
$$

In this section we are going to solve the inverse problem for for $\left(\frac{A^p+B^p}{2}\right)^{1/p}$ and $\left(\frac{A^q+B^q}{2}\right)^{1/q}$. Namely, suppose that $0\le X \le Y$. Solving the inverse mean problem is to find positive definite matrices $A$ and $B$ such that
\begin{equation}\label{inverse}
X = \left(\frac{A^p+B^p}{2}\right)^{1/p}, \quad Y = \left(\frac{A^q+B^q}{2}\right)^{1/q}.
\end{equation}
If this system has a positive solution, then we may use the result to characterize operator monotone function. 

Unfortunately, inequalities in (\ref{gen}) do not characterize operator monotone functions, in general. 

\begin{proposition}\label{counter1}
For any $q>1,$ there exists a non-monotone operator satisfying  
\[ f\left(\dfrac{A+B}{2}\right) \leq f\left(\left(\dfrac{A^q+B^q}{2}\right)^{1/q}\right)\]
for all positive definite matrices $A$ and $B$.
\end{proposition}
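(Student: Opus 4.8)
The plan is, for each fixed $q>1$, to exhibit a power function $f(t)=t^{r}$ with a suitable exponent $r=r(q)>1$. Since $t\mapsto t^{r}$ is operator monotone on $(0,\infty)$ only when $0\le r\le 1$, every such $f$ is automatically not operator monotone, so it suffices to choose $r$ for which
\[
\Bigl(\tfrac{A+B}{2}\Bigr)^{r}\le\Bigl(\tfrac{A^{q}+B^{q}}{2}\Bigr)^{r/q}
\]
holds for all positive definite $A,B$; this is exactly $f\bigl(\tfrac{A+B}{2}\bigr)\le f\bigl((\tfrac{A^{q}+B^{q}}{2})^{1/q}\bigr)$ when $f(t)=t^{r}$. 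I would take $r=\min\{q,2\}$ and argue in two regimes, using only the standard facts that $t\mapsto t^{s}$ is operator convex on $[0,\infty)$ for $s\in[1,2]$ and operator concave on $[0,\infty)$ for $s\in[0,1]$.

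For $1<q\le 2$ we have $r=q$, and the displayed inequality is simply $\bigl(\tfrac{A+B}{2}\bigr)^{q}\le\tfrac{A^{q}+B^{q}}{2}$, which is operator convexity of $t\mapsto t^{q}$ applied to $A$ and $B$. For $q\ge 2$ we take $r=2$: then $2/q\in(0,1]$, so $s\mapsto s^{2/q}$ is operator concave, and applying this to $A^{q}$ and $B^{q}$ (together with $(A^{q})^{2/q}=A^{2}$) gives $\bigl(\tfrac{A^{q}+B^{q}}{2}\bigr)^{2/q}\ge\tfrac{A^{2}+B^{2}}{2}$, while operator convexity of $t\mapsto t^{2}$ gives $\tfrac{A^{2}+B^{2}}{2}\ge\bigl(\tfrac{A+B}{2}\bigr)^{2}$; chaining these two inequalities yields the claim. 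In both cases $f(t)=t^{\min\{q,2\}}$ is not operator monotone yet satisfies the asserted inequality for all positive definite $A,B$.

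The point that requires care, and the only real obstacle, is that no single power can be used uniformly in $q$: the naive guess $f(t)=t^{q}$ works only while $t\mapsto t^{q}$ stays operator convex, i.e.\ for $q\le 2$, and it genuinely breaks for $q>2$, so for large $q$ one must descend to the exponent $2$ and route the estimate through the intermediate matrix $\tfrac{A^{2}+B^{2}}{2}$, where operator concavity of $s\mapsto s^{2/q}$ (available precisely because $q\ge 2$) becomes applicable. It is also worth recording, as motivation for why no characterization is possible here, that passing to commuting $A,B$ does not rescue the situation: for commuting pairs the hypothesis merely forces $f$ to be monotone increasing, since the pairs $\bigl(\tfrac{a+b}{2},(\tfrac{a^{q}+b^{q}}{2})^{1/q}\bigr)$ realize every ratio in $[1,2^{1-1/q}]$ and one iterates, so the failure of operator monotonicity is a genuinely non-commutative effect --- the same reason the inverse-mean scheme behind Lemma~\ref{Lemma.TB} does not transfer to the naive power mean, which is not congruence-covariant.
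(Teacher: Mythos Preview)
Your proof is correct and follows essentially the same route as the paper: both exhibit $f(t)=t^{r}$ with $1<r\le\min\{2,q\}$ and derive the inequality from operator convexity of $t\mapsto t^{r}$ together with operator concavity of $t\mapsto t^{r/q}$ applied to $A^{q},B^{q}$. The only cosmetic difference is that the paper keeps $r$ arbitrary in the interval $(1,\min\{2,q\}]$ and writes the two steps as a single chain, whereas you fix $r=\min\{2,q\}$ and split into the cases $q\le 2$ and $q\ge 2$; in the former case your argument is the paper's with the concavity step becoming trivial ($r/q=1$), and in the latter it is the paper's with $r=2$.
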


\begin{proof}
For a fixed number $ 1 < r \leq \min\{2,q\}$, we consider the operator $f(x) = x^r$ that is a non-monotone operator. It follows from the operator convexity of $x^r$ and the operator concavity of $x^{r/q}$ that 
\[ \left(\dfrac{A+B}{2}\right)^r \leq \dfrac{A^r+B^r}{2} \leq \left(\dfrac{A^q+B^q}{2}\right)^{r/q} \leq \left[\left(\dfrac{A^q+B^q}{2}\right)^{1/q}\right]^{r}.\]
Hence, the operator monotone function $f$ satisfies 
\[ f\left(\dfrac{A+B}{2}\right) \leq f\left(\left(\dfrac{A^q+B^q}{2}\right)^{1/q}\right),\]
which completes the proof of Lemma \ref{counter1}. 
\end{proof}

Similarly, we hope to have the same conclusion for the first inequality, namely, for $1/2 \leq p \leq 1$, there exists a non-monotone operator satisfying
\[ f\left(\left(\dfrac{A^p+B^p}{2}\right)^{1/p}\right) \leq f\left(\dfrac{A+B}{2}\right).\]
However, when $p=1/2$ we could be able to solve the inverse problem and establish a new characterization of operator monotone functions. 

\begin{theorem}\label{Theorem for p=1/2}
Let $f$ be a continuous function on $[0, \infty)$ that satisfies the following inequality
\[ f\left(\left(\dfrac{A^{1/2}+B^{1/2}}{2}\right)^{2}\right) \leq f\left(\dfrac{A+B}{2}\right),\]
for any positive semidefinite matrices $A$ and $B$. Then $f$ is operator monotone. 
\end{theorem}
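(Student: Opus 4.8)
The plan is to follow the blueprint of Theorem \ref{Matrix.Theo 2}: reduce the operator monotonicity of $f$ to a \emph{local} inverse‑mean problem and then propagate the hypothesis along a chain of comparisons. It suffices to prove $f(X)\le f(Y)$ for positive definite $X\le Y$; the general positive semidefinite case then follows by continuity of $f$ together with the perturbation $X\mapsto X+\varepsilon I$, $Y\mapsto Y+\varepsilon I$. Thus everything rests on two ingredients: an explicit solution of the inverse problem when $Y$ is close to $X$, and the spectral‑interpolation chain already constructed in the proof of Theorem \ref{Matrix.Theo 2}.

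The key step is the local inverse problem: \emph{if $X,Y$ are positive semidefinite with $X\le Y\le 2X$, then there exist positive semidefinite $A,B$ with}
\[
X=\left(\frac{A^{1/2}+B^{1/2}}{2}\right)^{2},\qquad Y=\frac{A+B}{2}.
\]
Here the exponent $1/2$ is essential. Put $P=X^{1/2}$ and $Q=(Y-X)^{1/2}$ (both legitimate positive semidefinite square roots, since $X\ge 0$ and $Y-X\ge 0$), and set $A=(P+Q)^{2}$, $B=(P-Q)^{2}$. Since $P+Q\ge 0$ we have $A^{1/2}=P+Q$; and since $Y-X\le X$ — exactly the hypothesis $Y\le 2X$ — operator monotonicity of $t\mapsto t^{1/2}$ gives $Q=(Y-X)^{1/2}\le X^{1/2}=P$, hence $P-Q\ge 0$ and $B^{1/2}=P-Q$. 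Therefore $\tfrac{1}{2}(A^{1/2}+B^{1/2})=P=X^{1/2}$, which yields the first identity, while the elementary identity $(P+Q)^{2}+(P-Q)^{2}=2P^{2}+2Q^{2}$ gives $A+B=2X+2(Y-X)=2Y$, the second.

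Granting this, fix $\gamma_{0}\in(1,2)$ and build the chain exactly as in the proof of Theorem \ref{Matrix.Theo 2}: the spectral‑decomposition interpolation there (whose construction only uses $\gamma_{0}>1$) produces positive definite matrices $X=Z_{1}\le Z_{2}\le\dots\le Z_{m}=Y$ with $Z_{k+1}\le\gamma_{0}Z_{k}<2Z_{k}$ for every $k$. For each consecutive pair the local inverse problem supplies positive semidefinite $A_{k},B_{k}$ with $Z_{k}=\left(\tfrac{1}{2}(A_{k}^{1/2}+B_{k}^{1/2})\right)^{2}$ and $Z_{k+1}=\tfrac{1}{2}(A_{k}+B_{k})$; applying the hypothesis inequality to $A_{k},B_{k}$ gives $f(Z_{k})\le f(Z_{k+1})$. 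Concatenating, $f(X)=f(Z_{1})\le f(Z_{2})\le\dots\le f(Z_{m})=f(Y)$, so $f$ is operator monotone on $(0,\infty)$.

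I expect the one genuine difficulty to be identifying the solvability window of the local inverse problem and checking that it meshes with the chain: the lower bound $X\le Y$ is forced (operator convexity of $t\mapsto t^{2}$ makes the power‑$1/2$ mean dominated by the arithmetic mean), and the content is that the upper bound — the one that keeps $B=(P-Q)^{2}$ positive semidefinite — is precisely $Y\le 2X$, a ratio‑$2$ window, which is exactly what permits $\gamma_{0}<2$ in the interpolation. It is worth emphasizing that the closed form above hinges on the identity $(P+Q)^{2}+(P-Q)^{2}=2(P^{2}+Q^{2})$, special to the power $1/2$; for a general exponent the map $(A^{p}+B^{p})^{1/p}$ admits no such identity, consistent with the expectation, flagged just before the statement, that the characterization should fail when $p\ne 1/2$.
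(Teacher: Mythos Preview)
Your proposal is correct and follows essentially the same approach as the paper: solve the local inverse problem on the window $X\le Y\le 2X$ via $A=(X^{1/2}+(Y-X)^{1/2})^{2}$, $B=(X^{1/2}-(Y-X)^{1/2})^{2}$, then invoke the spectral‑interpolation chain of Theorem~\ref{Matrix.Theo 2} with $\gamma_{0}\in(1,2)$. Your derivation is in fact slightly more explicit than the paper's, since you justify $P-Q\ge 0$ via operator monotonicity of $t^{1/2}$ where the paper merely asserts positivity.
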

\begin{proof}
Firstly, we show that $f(X) \leq f(Y)$ for any positive semidefinite matrices $X,Y$ with $0 \leq X \leq Y \leq 2X$. Indeed, we need to solve the following system
\begin{equation}\label{system11}
\begin{cases}
    \left(\dfrac{A^{1/2}+B^{1/2}}{2}\right)^{2} = X \\
    \dfrac{A+B}{2} = Y.
\end{cases} 
\end{equation} 
Subtracting the first equation from the second, we obtain
$$
Y-X = \left(\dfrac{A^{1/2}-B^{1/2}}{2}\right)^{2}.
$$
Therefore, system (\ref{system11}) is equivalent to 
\begin{equation*}
\begin{cases}
    \dfrac{A^{1/2}+B^{1/2}}{2} = X^{1/2} \\
    \dfrac{A^{1/2}-B^{1/2}}{2} =( Y - X)^{1/2}.
\end{cases} 
\end{equation*}
The last system has a unique positive solution as
\begin{align*}
A = \left(X^{1/2} + (Y-X)^{1/2}\right)^2, \quad 
B = \left(X^{1/2} - (Y-X)^{1/2}\right)^2.
\end{align*}
Mention that the condition $0 \leq X \leq Y < 2X$ guarantees the semidefinite positivity of $A$ and $B$. Thus, $f(X) \leq f(Y)$.

Now, for any positive semidefinite matrices $0 \leq X \leq Y$, we identically apply the arguments in the proof of Theorem \ref{Matrix.Theo 2} to obtain a positive integer $m$ and positive semidefinite matrices $Z_1,\dots,Z_m$ such that 
\[ 0 \leq X = Z_1 \leq Z_2 \leq Z_3 \leq \dots \leq Z_{m-1} \leq Y \leq Z_m\]
with $Z_k \leq Z_{k+1} < 2Z_k$ for all $k=1,2,\dots,m$.
Therefore, combining with the previous arguments, we get 
\[ f(X)=f(Z_1) \leq f(Z_2) \leq \dots \leq f\left(Z_{m-1}\right) \leq f(Y).\]
\end{proof} 

From the proof of Theorem \ref{Theorem for p=1/2}, under a special condition on $X$ and $Y$,  the inverse problem (\ref{inverse}) has a positive solution.
\begin{corollary}\label{cor}
The following inverse problem has a positive semidefinite solution for $X^2 \le Y^2 \le \sqrt{2} X^2,$  
\begin{equation}\label{cor3}
\frac{A+B}{2} = X, \quad \left(\dfrac{A^2+B^2}{2}\right)^{1/2} =Y.
\end{equation}
\end{corollary}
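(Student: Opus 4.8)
The plan is to solve the system \eqref{cor3} explicitly, following the same idea as the first part of the proof of Theorem \ref{Theorem for p=1/2}. Set $S := (Y^2 - X^2)^{1/2}$; this is a well-defined positive semidefinite matrix, since the hypothesis includes $X^2 \le Y^2$. I would then propose
\[
A := X + S, \qquad B := X - S,
\]
so that $\tfrac{A+B}{2} = X$ is immediate. For the second equation, passing to squares it is equivalent to require $\tfrac{A^2+B^2}{2} = Y^2$; and indeed, expanding $(X+S)^2 + (X-S)^2 = 2X^2 + 2S^2$ — an identity that holds even though $X$ and $S$ need not commute, because the cross terms $\pm(XS+SX)$ cancel — one gets $\tfrac{A^2+B^2}{2} = X^2 + S^2 = X^2 + (Y^2 - X^2) = Y^2$, hence $\left(\tfrac{A^2+B^2}{2}\right)^{1/2} = Y$ since $Y \ge 0$. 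This is the exact analogue of "subtracting the first equation from the second" in Theorem \ref{Theorem for p=1/2}, the relation $\left(\tfrac{A-B}{2}\right)^2 = Y^2 - X^2$ now playing the role of $\left(\tfrac{A^{1/2}-B^{1/2}}{2}\right)^2 = Y - X$.

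It remains to verify that $A$ and $B$ are positive semidefinite. Since $X \ge 0$ and $S \ge 0$, clearly $A = X + S \ge 0$. For $B = X - S$, I would invoke the operator monotonicity of $t \mapsto t^{1/2}$ on $[0,\infty)$: it suffices to show $X^2 \ge S^2$, i.e. $X^2 \ge Y^2 - X^2$, i.e. $2X^2 \ge Y^2$, and this follows from the hypothesis because $Y^2 \le \sqrt{2}\,X^2 \le 2X^2$ (the second inequality holds as $(2-\sqrt{2})X^2 \ge 0$). Thus $X = (X^2)^{1/2} \ge (S^2)^{1/2} = S$, so $B \ge 0$, and the pair $(A,B)$ is the desired positive semidefinite solution of \eqref{cor3}.

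I do not anticipate a genuine obstacle. The only delicate point is the positivity of $B$, and that is precisely where the quantitative bound $Y^2 \le \sqrt{2}\,X^2$ enters (in fact the weaker bound $Y^2 \le 2X^2$ would already be enough). The non-commutativity of $X$ and $S$ causes no trouble, since the key identity $(X+S)^2 + (X-S)^2 = 2X^2 + 2S^2$ requires no commutativity, and the ansatz $A = X \pm S$ reduces the whole system to a single positivity condition on $S$.
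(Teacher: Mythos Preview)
Your proposal is correct and essentially coincides with the paper's proof: the paper reduces to Theorem~\ref{Theorem for p=1/2} via the substitution $A_0=A^2$, $B_0=B^2$, which produces exactly your solution $A=X+(Y^2-X^2)^{1/2}$, $B=X-(Y^2-X^2)^{1/2}$, and it likewise observes that the weaker bound $Y^2\le 2X^2$ already suffices. Your write-up just performs the computation directly instead of invoking the theorem, but the argument and the solution are the same.
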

\begin{proof}
If we put $A_0 = A^2, B_0=B^2$, then the system (\ref{cor3}) is equivalent to the following
\begin{equation*}\label{sys2}
\frac{A_0^{1/2}+B_0^{1/2}}{2} = X, \quad \left(\dfrac{A_0+B_0}{2}\right)^{1/2} =Y,
\end{equation*}
or,
\begin{equation}\label{sys2}
\left(\frac{A_0^{1/2}+B_0^{1/2}}{2}\right)^2 = X^2, \quad \dfrac{A_0+B_0}{2} =Y^2.
\end{equation}
According to the proof of Theorem \ref{Theorem for p=1/2}, the last system has positive semidefinite solutions when $X^2 \le Y^2 \le 2X^2$.  
\end{proof}

The inverse problem for general matrix power means is still unsolved.  

{\bf Open question:} Let $1/2 \le p \le 1 \le q$. Does the following system have positive solutions
\begin{equation*}
\begin{cases}
    \left(\dfrac{A^{p}+B^{p}}{2}\right)^{1/p} = X \\
    \left(\dfrac{A^{q}+B^{q}}{2}\right)^{1/q} = Y
\end{cases} 
\end{equation*} 
whenever $0 \le X \le Y$? This case is just a special case of a general result obtained by Hiai and Audenaert in \cite{hiaiaudenaert}. If the answer is positive, then we immediately have a new characterization: a continuous function $f$ is operator monotone function on $(0, \infty)$ if and only if for any positive definite matrices $A$ and $B$  the following inequality
$$
f\left(\left(\dfrac{A^{p}+B^{p}}{2}\right)^{1/p}\right) \le f\left(\left(\dfrac{A^{q}+B^{q}}{2}\right)^{1/q}\right)
$$
holds true.

\subsection*{Acknowledgement}
We are very grateful to Prof. Mikael de la Salle for useful discussions. The Van Nguyen  was funded by Vingroup Joint Stock Company and supported by the Domestic Master Scholarship Programme of Vingroup Innovation Foundation (VINIF), Vingroup Big Data Institute (VINBIGDATA), code VINIF.2020.ThS.KHTN.05.

\end{document}